\newtheorem{thm}{Theorem}[section]
\newtheorem*{thm*}{Theorem}
\newtheorem{lemma}[thm]{Lemma}
\theoremstyle{definition}
\theoremstyle{lemma}
\newtheorem*{lem*}{Lemma}
\numberwithin{equation}{section}
\title{Some singular value inequalities via convexity}
\author{Zoltán Léka}
\address{Royal Holloway, University of London \\ Egham Hill \\ Egham \\ Surrey \\ TW20 0EX \\ United Kingdom}
\email{zoltan.leka@rhul.ac.uk}
\thanks{This study was supported by the Marie Curie IF Fellowship, Project 'Moments', Grant no. 653943.}
\subjclass[2010]{Primary 15A18, 15A39; Secondary 15A60, 60A99.}
\keywords{trace, singular value, convexity, norms}
\begin{document}
         
 \begin{abstract}
   If $c_1(Z) \geq \hdots \geq c_n(Z)$ denote the Euclidean lengths of the column vectors of any $n \times n$ matrix $Z,$
   then a fundamental inequality related to Hadamard products states that 
        $$ \sum_{i=1}^k \sigma_i(X^*Y \circ B) \leq \sum_{i=1}^k c_i(X) c_i(Y) \sigma_i(B) \qquad 1 \leq k \leq n,$$
   where $\sigma_i(\cdot)$ is the $i$th singular value. In this paper, we shall offer a simple proof of this result via
   convexity arguments. In addition, this technique is applied to obtain some further singular value inequalities as well.
 \end{abstract}

\maketitle
  
  \section{Introduction}
  
   Let us denote the ordered singular values of any $n \times n$ matrix $A$ by $\sigma_1(A) \geq \hdots \geq \sigma_n(A).$ 
   For any $A, B \in M_n(\mathbb{C}),$ von Neumann's trace inequality states that 
    \begin{equation}
      |\mbox{Tr } AB| \leq \sum_{i=1}^n \sigma_i(A)\sigma_i(B).
    \end{equation}
   One may consider the result as a matricial analogue of the rearrangement inequality for reals \cite{HLP}. 
   Relying upon the polar decomposition of matrices, a straightforward corollary is the following:
    \begin{equation}
       \sum_{i=1}^n \sigma_i(AB) \leq \sum_{i=1}^n \sigma_i(A)\sigma_i(B).
    \end{equation}
   However, we know that a family of inequalities is valid here, in fact,
     \begin{equation}
       \sum_{i=1}^k \sigma_i(AB) \leq \sum_{i=1}^k \sigma_i(A)\sigma_i(B) \qquad \mbox{for all } \quad 1 \leq k \leq n,    
     \end{equation}
    (see e.g. \cite{B}). 
 
   Now let $\circ$ denote the Hadamard product of matrices; i.e. $ (A \circ B)_{ij} = a_{ij} b_{ij}.$
   The following singular value inequality holds for Hadamard products as well:
    \begin{equation}
       \sum_{i=1}^k \sigma_i(A \circ B) \leq \sum_{i=1}^k \sigma_i(A)\sigma_i(B) \qquad \mbox{for all } \quad 1 \leq k \leq n.     
    \end{equation}

   We recall that several different proofs of this result can be found in the literature \cite{BS}, \cite{HJ}, \cite{O}, \cite{Z}, furthermore, a unified approach to (1.3) and (1.4) was provided in \cite{HM}.
   Singular value inequalities of Hadamard products have already attracted many researchers, and a common improvement of scattered
   results was given by Ando, Horn and Johnson in \cite{AHJ} and \cite{HJb}. Their result says that if $c_1(Z) \geq \hdots \geq c_n(Z)$ denote the ordered Euclidean lengths of the column vectors of any $n \times n$ matrix $Z,$
   then, for any decomposition $A = X^*Y,$ 
       \begin{equation}
         \sum_{i=1}^k \sigma_i(A \circ B) \leq \sum_{i=1}^k c_i(X) c_i(Y) \sigma_i(B) \qquad 1 \leq k \leq n.
       \end{equation}
   This result is obviously much stronger than $(1.4).$ For a general review and thorough exposition on singular value inequalities, we refer the reader to
   \cite{A}, \cite{B} and \cite{HJb}.
   
   Our goal is to offer a new proof of the previous inequality (1.5) by means of convexity arguments and exploit
   this technique to deduce further inequalities as well. 
   Briefly, the main idea here is to tackle a chain of convex optimization problems over the unit balls
   of matrices with respect to different matrix norms. 
   Then, with an adequate description of the extreme points of these balls, we can readily establish the above inequalities.

   \section{Some weighted norms on $\mathbb{R}^n$ and $\mathbb{M}_n$}

   We say that an $n$-tuple $w \in \mathbb{R}^n_+$ is a {\it weight} if its non-negative entries are arranged decreasingly $w_1 \geq \hdots \geq w_n \geq 0.$
   Given a vector $x \in \mathbb{R}_n,$ denote $|x_i|^\downarrow$ the decreasingly ordered components of its entrywise modulus. 
   For any $x \in \mathbb{R}^n,$ the weighted vector $k$-norm is defined by
    $$ \|x\|_{(k)}^w = \sum_{i=1}^k w_i |x_i|^\downarrow,$$
   where $w$ is a weight and $w_1 \geq \hdots \geq w_k > 0.$ We just simply write $\|\cdot\|_{(k)}$ for the vector $k$-norm when
   $w_1 = \hdots = w_k = 1.$ We notice that the usual $\ell_1^n$ and $\ell_\infty^n$ norms now are denoted by $\|\cdot\|_{(n)}$ and  
   $\|\cdot\|_{(1)},$ respectively. Determining the dual norm $\|\cdot\|_{(k)^*}^w$ of the weighted vector $k$-norm may be a bit tricky.
   For instance, this has been done
   in the earlier papers \cite{WDST}, \cite[Lemma 1]{L}, and particularly in \cite{BPS}. In fact, 
     $$ \|x\|_{(k)^*}^w = 
            \max \left\{ {\| x\|_{(1)} \over w_1}, {\| x\|_{(2)} \over w_1 +  w_2}, \hdots, {\|x\|_{(k-1)} \over w_1 + \hdots + w_{k-1} }, {\| x\|_{(n)} \over w_1  + \hdots + w_k} \right \}.$$
   For simplicity, we recall that $\|\cdot\|_{(k)^*} = \max \: (\|\cdot\|_{(1)}, {1 \over k}\|\cdot\|_{(n)}),$ see \cite{B}.        
   These observations may lead to a description of the extreme points of the unit ball $\mathcal{B}_{(k)}^w$ in $\mathbb{R}^n$         
   with respect to $ \|\cdot\|_{(k)}^w.$ Indeed, we may infer that 
    \begin{equation}
      {\rm ext } \:  \mathcal{B}_{(k)}^w \subseteq \bigcup_{1 \leq j \leq k-1} \left(\sum_{i=1}^j w_i\right)^{-1} E_j  \cup \left(\sum_{i=1}^k w_i \right)^{-1} E_n,
    \end{equation}
   where $E_i$ is the set of vectors in $\mathbb{R}^n$ with $i$ non-zero coordinates which are $+1$ or $-1$ (see \cite[Lemma 2]{L}). 
   A different approach to obtain
   ${\rm ext } \:  \mathcal{B}_{(k)}^w$ is presented in \cite{LT}, however, the inclusion $(2.1)$ is enough for the rest of the paper.   
   
   Given an $n \times n$ matrix $A,$ its ordered singular values are denoted by $\sigma_1(A) \geq \hdots \geq \sigma_n(A).$   
   The weighted Ky Fan $k$-norms of $A$ are defined by 
      $$ \|A\|_{(k)}^w = \sum_{i=1}^k  w_i\sigma_i(A).$$
   We recall that these norms are unitarily invariant; i.e. $\|A\|_{(k)}^w =  \|UAV\|_{(k)}^w$ for any unitary $U$ and $V.$  The set of $n \times n$ partial isometries of rank-$k$ is denoted by $R_k.$ 
   From (2.1) and a simple application of the singular value decomposition, we readily get the inclusion 
     \begin{equation}
      \mbox{\rm ext } \mathfrak{B}_{(k)}^w \subseteq  \bigcup_{1 \leq j \leq k-1} \left(\sum_{i=1}^j w_i\right)^{-1} R_j  \cup \left(\sum_{i=1}^k w_i \right)^{-1} R_n
    \end{equation}
   (see e.g. \cite{LT} and \cite{CLT}). It is somewhat interesting that a very similar result holds for a class of
   unitarily invariant norms defined through the $s$-numbers in infinite dimensional Hilbert
   spaces \cite{CLT}.
   We notice that the extreme points with respect to the Ky Fan $k$-norms are given by 
      \begin{equation}
      {\rm ext } \:  \mathfrak{B}_{(k)} = R_1 \cup R_n/k, 
    \end{equation}
    see \cite[Exercise IV.2.12]{B}.
   
   In the next section we shall need another weighted norm on the linear space of $n \times n$ matrices.
   For any matrix $A,$ let $c_1(A) \geq \hdots \geq c_n(A)$ be the decreasingly arranged Euclidean norms of the column vectors of $A.$
   Then one can consider the weighted norm of $A$ by  
    $$ \|A\|_{c,k}^w = \sum_{i=1}^k w_i c_i(A).$$
   To get a description of the extreme points of the unit ball $\mathfrak{B}_{c,k}^w$ with respect to the norm $\|\cdot\|_{c,k}^w,$
   let $C_k$ denote the set of $n \times n$ matrices which has exactly $k$ non-zero column vectors and each non-zero column vector is a (Euclidean) unit vector.
   Then a simple reasoning gives the inclusion
      \begin{equation}
      \mbox{\rm ext } \mathfrak{B}_{c,k}^w \subseteq  \bigcup_{1 \leq j \leq k-1} \left(\sum_{i=1}^j w_i\right)^{-1} C_j  \cup \left(\sum_{i=1}^k w_i \right)^{-1} C_n.
    \end{equation}
   Indeed, we need to check that if $A \in \mbox{\rm ext } \mathfrak{B}_{c,k}^w$ then
   $c(A) = (c_1(A), \hdots, c_n(A)) \in {\rm ext } \: \mathcal{B}_{(k)}^w.$ If $c(A) = {1 \over 2} (x + y),$
   where $x, y \in  \mathcal{B}_{(k)}^w,$ let $O_i$ denote the rotation of the Euclidean space $\mathbb{R}^n$ such that 
   $O_i((0, \hdots, 0 , c_i(A), 0, \hdots, 0)^T) = i$th row vector of $A$. Now let $X$ and $Y$ denote the matrices such that 
   their $i$th rows are given by $O_i((0, \hdots, 0 , x_i, 0, \hdots, 0)^T)$ and $O_i((0, \hdots, 0 , y_i, 0, \hdots, 0)^T),$
   respectively. Then it is simple to see that $A = {1 \over 2} (X + Y)$ and $X, Y \in  \mathfrak{B}_{c,k}^w.$ 
   Furthermore, we notice that the extreme points with respect to the  $\|\cdot\|_{c,k}$ norms (i.e. $w = (1,\hdots, 1))$ are given by 
      \begin{equation}
       {\rm ext } \:  \mathfrak{B}_{c,k} = C_1 \cup C_n/k
      \end{equation}

     \section{Proof of the Ando--Horn--Johnson inequality}

     We start with two preliminary lemmas.
     We notice that these inequalities essentially include the extremal cases in (1.5). Throughout the paper we say that
     $A \in M_n(\mathbb{C})$ is a contraction if $\sigma_1(A) \leq 1.$ 
    
     \begin{lemma}
       Let $X, Y$ be $n \times n$ matrices such that $c_i(X), c_i(Y) \leq 1$ for all $1 \leq i \leq n.$ If $S$ is contraction, 
       then 
        $$ \sigma_1(X^* Y\circ S) \leq 1. $$
     \end{lemma}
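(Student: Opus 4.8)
The plan is to estimate the operator norm $\sigma_1(X^*Y \circ S)$ directly through its variational characterization. Since $\sigma_1(M) = \max\{|v^* M u| : \|u\| = \|v\| = 1\}$ for any $M \in M_n(\mathbb{C})$, it suffices to prove that
$$ |v^*(X^*Y \circ S)u| \leq 1 \qquad \text{for all unit vectors } u, v \in \mathbb{C}^n. $$
The hypothesis $c_i(X), c_i(Y) \leq 1$ for all $i$ is equivalent to $c_1(X), c_1(Y) \leq 1$, which simply means that \emph{every} column of $X$ and of $Y$ is a vector of Euclidean norm at most $1$; this is the only consequence of the assumption that I will use.

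The key step is a rearrangement that isolates $S$ as a single bilinear form. Writing $X_{mi}, Y_{mj}, S_{ij}$ for the matrix entries and using $(X^*Y)_{ij} = \sum_m \overline{X_{mi}}\, Y_{mj}$, I would expand
$$ v^*(X^*Y \circ S)u = \sum_{i,j} \overline{v_i}\, S_{ij}\, u_j \sum_{m} \overline{X_{mi}}\, Y_{mj} = \sum_{m=1}^n \bigl(a^{(m)}\bigr)^* S\, b^{(m)}, $$
where for each $m$ I set $a^{(m)}, b^{(m)} \in \mathbb{C}^n$ with entries $a^{(m)}_i = v_i X_{mi}$ and $b^{(m)}_j = u_j Y_{mj}$. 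Because $S$ is a contraction, each term obeys $|(a^{(m)})^* S\, b^{(m)}| \leq \|a^{(m)}\|\,\|b^{(m)}\|$, and then a Cauchy--Schwarz inequality across the index $m$ yields
$$ |v^*(X^*Y \circ S)u| \leq \sum_m \|a^{(m)}\|\,\|b^{(m)}\| \leq \Bigl(\sum_m \|a^{(m)}\|^2\Bigr)^{1/2}\Bigl(\sum_m \|b^{(m)}\|^2\Bigr)^{1/2}. $$

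It then remains to evaluate the two sums, which is routine bookkeeping. Interchanging the order of summation gives $\sum_m \|a^{(m)}\|^2 = \sum_i |v_i|^2 \sum_m |X_{mi}|^2$, and since $\sum_m |X_{mi}|^2$ is the squared norm of the $i$th column of $X$, hence at most $1$, this is bounded by $\sum_i |v_i|^2 = \|v\|^2 = 1$; the analogous computation gives $\sum_m \|b^{(m)}\|^2 \leq \|u\|^2 = 1$. Combining the estimates yields $|v^*(X^*Y \circ S)u| \leq 1$, and taking the maximum over unit $u, v$ proves the lemma. I expect the genuinely substantive point to be the factorization $v^*(X^*Y \circ S)u = \sum_m (a^{(m)})^* S\, b^{(m)}$ in the second step, as it is exactly what converts the Hadamard structure into a form to which the contractivity of $S$ can be applied; once this identity is in hand the rest is a double application of Cauchy--Schwarz together with the column-norm hypothesis.
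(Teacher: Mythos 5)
Your proof is correct, but it takes a genuinely different route from the paper's. Your key identity $v^*(X^*Y\circ S)u=\sum_{m}\bigl(a^{(m)}\bigr)^*S\,b^{(m)}$ is valid (both sides equal $\sum_{i,j}\overline{v_i}\,S_{ij}\,u_j\sum_m \overline{X_{mi}}\,Y_{mj}$), each term is at most $\|a^{(m)}\|\,\|b^{(m)}\|$ since $\sigma_1(S)\leq 1$, and after Cauchy--Schwarz over $m$ the two remaining sums collapse to $\sum_i|v_i|^2$ (resp.\ $\sum_j|u_j|^2$) weighted by squared column norms of $X$ (resp.\ $Y$), all at most $1$. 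The paper argues differently: it first assumes $S$ is unitary, expands $\|(X^*Y\circ U)z\|_2^2$ row by row, applies Cauchy--Schwarz against the columns of $X$, and then crucially uses the orthonormality coming from $U$ (the step $\sum_i\bigl|\sum_j u_{ij}w_j\bigr|^2=\sum_j|w_j|^2$ with $w_j=z_j(\underline{y}_j)_k$) to collapse the double sum; because that computation genuinely requires unitarity, the general case is then obtained by writing the contraction $S$ as a convex combination of unitaries and invoking convexity of $S\mapsto\sigma_1(X^*Y\circ S)$. Your argument bypasses this reduction entirely: by isolating $S$ inside the bilinear forms $\bigl(a^{(m)}\bigr)^*S\,b^{(m)}$ you apply contractivity of $S$ directly, so arbitrary contractions are handled in one pass using nothing beyond Cauchy--Schwarz. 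The trade-off: your proof is more elementary and self-contained (no decomposition of contractions into unitaries is needed), while the paper's version deliberately showcases the convexity-and-extreme-points machinery that drives the rest of its results.
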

     
     \begin{proof}  First, assume that $S$ is unitary: $S \equiv U = (u_{ij}).$
       Then we need to check that
        $$\|(X^* Y\circ U)z\|_2 \leq \|z\|_2$$
       for all $z \in \mathbb{C}^n.$ Indeed, setting $X^* = (\underline{x}^*_1 \: \hdots \: \underline{x}^*_n)^T$ and $Y=(\underline{y_1} \: \hdots \: \underline{y}_n),$
       \begin{align*}
          \|(X^* Y\circ U)z\|_2^2 &= \sum_{i=1}^n \Bigl| \sum_{j=1}^n \langle \underline{x}^*_i, \underline{y}_j \rangle u_{ij} z_j \Bigr|^2 \\
                                  &\leq \sum_{i=1}^n  \|\underline{x}^*_i\|_2^2 \Bigl\| \sum_{j=1}^n   u_{ij} z_j \underline{y}_j  \Bigr\|^2_2 \\
                                  &\leq  \sum_{i=1}^n \sum_{k=1}^n \Bigl| \sum_{j=1}^n   u_{ij} z_j (\underline{y}_j)_k  \Bigr|^2 \\
                                  &=  \sum_{k=1}^n \left( \sum_{i=1}^n \Bigl| \sum_{j=1}^n   u_{ij} z_j (\underline{y}_j)_k  \Bigr|^2 \right) \\
                                  &=  \sum_{k=1}^n  \Biggl( \sum_{j=1}^n  | z_j (\underline{y}_j)_k |^2 \Biggr) \\
                                  &= \sum_{j=1}^n  | z_j|^2 \|\underline{y}_j\|_2^2 \\
                                  &\leq \|z\|_2^2.
       \end{align*}
         Since every contraction $S$ is the convex sum of unitaries, the proof is complete.
     \end{proof}

     We recall that Lemma 3.1 was settled in \cite[p. 6]{JN} with a completely different method.
    
   \begin{lemma}
       Let $X, Y$ be $n \times n$ matrices such that $c_i(X) = c_i(Y) = 1$ for all $1 \leq i \leq n.$ If $Q$ is a rank-one partial isometry, 
       then 
        $$ \sum_{i=1}^n \sigma_i(X^* Y\circ Q) \leq 1. $$
     \end{lemma}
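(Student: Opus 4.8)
The plan is to turn the Hadamard product into an ordinary matrix product by absorbing the rank-one factor into diagonal scalings, and then to bound the resulting trace norm by a Cauchy--Schwarz estimate. Since $Q$ is a rank-one partial isometry, its unique nonzero singular value equals $1$, so $Q = uv^*$ for some unit vectors $u, v \in \mathbb{C}^n$. The key identity is that Hadamard multiplication by $uv^*$ coincides with left and right multiplication by diagonal matrices: with $D_u = \mathrm{diag}(u_1,\ldots,u_n)$ and $\overline{D_v} = \mathrm{diag}(\overline{v_1},\ldots,\overline{v_n})$, one checks entrywise that
$$ X^*Y \circ Q = D_u\,(X^*Y)\,\overline{D_v} = (X\overline{D_u})^*\,(Y\overline{D_v}). $$

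Writing $P = X\overline{D_u}$ and $R = Y\overline{D_v}$, the quantity to estimate becomes the trace (nuclear) norm $\sum_{i=1}^n \sigma_i(X^*Y\circ Q) = \|P^*R\|_1$. I would then use that the trace norm is dual to the operator norm, so that $\|P^*R\|_1 = \sup\{|\mathrm{Tr}(W P^*R)| : \|W\|\le 1\}$, where $\|\cdot\|$ denotes the operator norm, and rewrite $\mathrm{Tr}(WP^*R) = \langle PW^*, R\rangle$ in the Hilbert--Schmidt inner product. The Cauchy--Schwarz inequality together with $\|PW^*\|_2 \le \|P\|_2\,\|W\|$ then yields the clean bound $\|P^*R\|_1 \le \|P\|_2\,\|R\|_2$, the product of the two Frobenius norms. (Equivalently, this is the Hölder inequality for Schatten norms with exponents $2,2,1$.)

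It remains to evaluate the Frobenius norms, which is where the column-norm hypotheses enter. The $j$th column of $P = X\overline{D_u}$ is $\overline{u_j}$ times the $j$th column of $X$, and therefore has Euclidean length $|u_j|\,c_j(X) = |u_j|$, since every column of $X$ is a unit vector. Summing over columns gives $\|P\|_2^2 = \sum_{j=1}^n |u_j|^2 = \|u\|_2^2 = 1$, and symmetrically $\|R\|_2^2 = \|v\|_2^2 = 1$. Combining the last two paragraphs produces $\sum_{i=1}^n \sigma_i(X^*Y\circ Q) \le 1$, as claimed.

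The only genuinely delicate point is the opening reduction: once one recognizes that a Hadamard factor of rank one is the same as conjugating by diagonal matrices, the problem collapses to the elementary trace-norm estimate $\|P^*R\|_1 \le \|P\|_2\,\|R\|_2$ together with a one-line Frobenius computation. I expect the main obstacle to be spotting (and carefully verifying, with the correct complex conjugates) this diagonal factorization; everything after it is routine. If a proof avoiding the duality argument were preferred, the same inequality $\|P^*R\|_1 \le \|P\|_2\,\|R\|_2$ could instead be read off directly from the singular value decompositions of $P$ and $R$.
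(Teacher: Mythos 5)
Your proof is correct and follows essentially the same route as the paper: the paper also writes $Q=uv^*$, absorbs the rank-one Hadamard factor into column rescalings of $X$ and $Y$ (so that $X^*Y\circ Q=\tilde{X}^*\tilde{Y}$ with $\|\tilde{X}\|_2,\|\tilde{Y}\|_2\le 1$), and concludes with the bound $\|\tilde{X}^*\tilde{Y}\|_1\le\|\tilde{X}\|_2\|\tilde{Y}\|_2$. The only cosmetic difference is that the paper derives this last Schatten--H\"older estimate from von Neumann's trace inequality followed by Cauchy--Schwarz on the singular values, whereas you use trace-norm/operator-norm duality and Cauchy--Schwarz in the Hilbert--Schmidt inner product.
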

     \begin{proof}
        Let $u$ and $v$ be unit vectors such that $Q = uv^*.$ 
        Then $$X^*Y \circ uv^* = ((\overline{u} \otimes {\bf 1}) \circ X)^* ((\overline{v} \otimes {\bf 1}) \circ Y).$$
        Notice that $\tilde{X} =  (\overline{u} \otimes {\bf 1}) \circ X$ and $\tilde{Y} =  (v \otimes {\bf 1}) \circ Y$ have Hilbert--Schmidt norms at most $1,$
        because the Euclidean norm of all entries is at most $1.$
        Thus von Neumann's trace inequality implies
        \begin{align*}
            \sum_{i=1}^n \sigma_i(X^* Y\circ Q) = \sum_{i=1}^n \sigma_i(\tilde{X}^* \tilde{Y})  &\leq  \sum_{i=1}^n \sigma_i(\tilde{X}^*) \sigma_i( \tilde{Y}) \\
                                                       &\leq  \left(\sum_{i=1}^n \sigma_i^2(\tilde{X})\right)^{1/2} \left(\sum_{i=1}^n \sigma_i^2( \tilde{Y})\right)^{1/2} \\
                                                       &\leq 1.
        \end{align*}
    \end{proof}

   \begin{thm}
      Let $A = X^*Y$ and $B$ be $n \times n $ matrices. Then 
       $$ \sum_{i=1}^k \sigma_i(A \circ B) \leq \sum_{i=1}^k c_i(X) c_i(Y) \sigma_i(B)$$
      holds for all $1 \leq k \leq n.$ 
   \end{thm}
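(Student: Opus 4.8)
The plan is to read the right-hand side as the weighted Ky Fan $k$-norm $\|B\|_{(k)}^w$ with weight $w_i=c_i(X)c_i(Y)$, which is a legitimate weight since each factor is nonincreasing, and to prove the equivalent homogeneous statement $\|X^*Y\circ B\|_{(k)}\le\|B\|_{(k)}^w$ for every $B$. For fixed $X,Y$ the left-hand side is a seminorm, hence a convex function of $B$, so it suffices to bound it by $1$ over the unit ball $\mathfrak{B}_{(k)}^w$; and a convex function attains its maximum over a compact convex set at an extreme point. Invoking the description $(2.2)$ of $\mathrm{ext}\,\mathfrak{B}_{(k)}^w$, I may therefore assume that $B$ is a suitably scaled partial isometry. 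This is the first link in the chain of optimization problems.

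The core of the argument is the contractivity of the Schur multiplier $T(S)=X^*Y\circ S$. When the columns of $X$ and $Y$ are unit vectors, Lemma 3.1 says precisely that $T$ is an operator-norm (that is, $\|\cdot\|_{(1)}$) contraction, while Lemma 3.2, applied to each rank-one term of a singular value decomposition of $S$ and summed by subadditivity, shows that $T$ is a trace-norm (that is, $\|\cdot\|_{(n)}$) contraction. From the extreme-point description $(2.3)$ one reads off $\mathfrak{B}_{(k)}=\mathrm{conv}\bigl(\mathfrak{B}_{(n)}\cup k^{-1}\mathfrak{B}_{(1)}\bigr)$, equivalently $\|M\|_{(k)}=\inf\{\|A\|_{(n)}+k\|C\|_{(1)}:M=A+C\}$; applying a splitting of $S$ through $T$ then transfers both contractivity estimates simultaneously to every Ky Fan $k$-norm. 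Thus, in the unit-column case, $\|X^*Y\circ B\|_{(k)}\le\|B\|_{(k)}$, which is the theorem there.

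It remains to produce the genuinely decreasing weights $c_i(X)c_i(Y)$ rather than the crude factor $c_1(X)c_1(Y)$ that a plain rescaling of the previous paragraph would yield; this is the next link of the chain and, I expect, the main obstacle. Fixing the partial isometry $B$ and the matrix $Y$, the map $X\mapsto\|X^*Y\circ B\|_{(k)}$ is convex, while the target $\sum_i c_i(X)c_i(Y)$ is a weighted column norm $\|X\|_{c,\,\cdot}^{\omega}$ whose weight $\omega$ is assembled from the numbers $c_i(Y)$ and the rank of $B$. Repeating the extreme-point reduction, now over the column-norm ball and using $(2.4)$, I may assume that $X$ has only unit or zero columns, and symmetrically for $Y$. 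In that fully reduced configuration $X^*Y$ is supported on a rectangular block on which all columns are unit, so $X^*Y\circ B$ is supported on the same block and has rank bounded by the block size; the unit-column estimate of the previous paragraph, combined with this support bound, then matches exactly the count $\sum_i c_i(X)c_i(Y)$.

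The delicate points I anticipate are largely bookkeeping: checking that the weight $\omega$ arising in the $X$-reduction is genuinely nonincreasing, so that $(2.4)$ is applicable; verifying that the extremal values at each stage telescope to the claimed sum rather than overcounting when the rank of $B$ and the numbers of unit columns of $X$ and $Y$ interact; and dealing with degenerate weights $w_i=0$ or $\omega_i=0$ coming from vanishing column lengths, which formally lie outside the definition of the weighted norms and are most cleanly handled by a perturbation argument that first makes all column lengths strictly positive and then passes to the limit by continuity.
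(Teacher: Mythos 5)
Your proposal is correct, and its reduction scheme is exactly the paper's: the same chain of convex maximizations over $\mathfrak{B}_{(k)}^{c(X)c(Y)}$, then the weighted column ball for $X$, then the unweighted one for $Y$, each collapsed to extreme points via $(2.2)$, $(2.4)$ and $(2.5)$. Where you genuinely diverge is the endgame. Once everything is extremal, the paper runs a further case analysis: in the all-unit-columns case with $B_j$ of rank $j$ it makes one more extreme-point pass, viewing $B_j/j$ inside the trace-norm ball so that Lemma 3.2 applies to a rank-one partial isometry, and in the mixed cases it uses a symmetrization (``$Y\in C_1$ or $X_l\in C_1$'') so that Lemma 3.1 alone finishes. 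You instead prove the unit-column case once and for all: Lemmas 3.1 and 3.2 say the Schur multiplier $T(S)=X^*Y\circ S$ is contractive in the operator and trace norms (the latter via SVD plus subadditivity, as you note), and the decomposition identity $\|M\|_{(k)}=\min\{\|A\|_{(n)}+k\|C\|_{(1)}\colon M=A+C\}$, equivalent to $(2.3)$, transfers both bounds to every Ky Fan $k$-norm simultaneously; the weighted statement then follows by combining this with the support bound $\mathrm{rank}(X^*Y\circ B)\le\min(p,q)$ for $X\in C_p$, $Y\in C_q$, since $\min\bigl(\min(p,q,k),\min(j,k)\bigr)=\min(p,q,j,k)$ is exactly the telescoped target. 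This buys a cleaner finish with no sub-sub-cases and makes transparent why the two lemmas suffice (they are interpolation endpoints), at the price of invoking the $K$-functional formula, which the paper never needs. One point you should make explicit: your unit-column estimate is stated for all-unit columns, but the reduced matrices in $C_p$, $C_q$ have zero columns; either observe that Lemma 3.2's proof only uses $c_i\le 1$ (the Hilbert--Schmidt bound is monotone), or note that zero columns arise by multiplying by diagonal orthogonal projections, which can only decrease singular values.
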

      
    \begin{proof}
       From a continuity argument we may assume that $c_i(X)$ and $c_i(Y)$ are non-zero reals.
       Let $c(Z)$ denote the vector $(c_1(Z), \hdots, c_n(Z))$ for any $Z \in M_n(\mathbb{C}).$
       Notice that we need to prove $$\max \left\{ \|X^*Y \circ B\|_{(k)} \colon B \in \mathfrak{B}_{(k)}^{c(X)c(Y)} \right\} = 1,$$
       for any fixed $X$ and $Y.$ Since the objective function is convex in $B,$
       we have the following two cases relying on (2.4).
       
       {\noindent \it Case 1.} $B = (\sum_{i=1}^j c_i(X)c_i(Y))^{-1} B_j,$ where $B_j$ is a rank-$j$ partial isometry and $1 \leq j \leq k-1.$
       We check that 
         $$ \sum_{i=1}^k \sigma_i(X^*Y \circ B_j) \leq \sum_{i=1}^j c_i(X) c_i(Y),$$ 
       or equivalently, 
            $$\max \left\{ \|X^*Y \circ B_j\|_{(k)} \colon X \in \mathfrak{B}_{c,j}^{c(Y)} \right\} = 1$$
       for any fixed $Y$ and $B_j.$

       First, if $ X = (\sum_{i=1}^l c_i(Y))^{-1}  X_l,$ where $1 \leq l \leq j-1,$ and $X_l \in C_l,$ we have by (2.5) that
         \begin{align*}
           \max \left\{ \|X_l^*Y \circ B_j\|_{(k)} \colon Y \in \mathfrak{B}_{c,l} \right\} &= \max \left\{ \|X_l^*Y \circ B_j\|_{(k)} \colon Y \in C_1 \cup C_n/l \right\} \\
                                                                                            &\leq \max_{X_l,Y \in C_n} \left\{ \|X_l^*Y \circ B_j\|_{(k)} \colon Y \in C_1 \mbox{ or } X_l \in C_1  \right\} \\
                                                                                            &\leq 1,
         \end{align*}
          where the last inequality follows from Lemma 3.1.
         
         Secondly, if $ X = (\sum_{i=1}^j c_i(Y))^{-1} X_n,$ where $X_n \in C_n,$  it is enough to see that
         $$\max \left\{ \|X_n^*Y \circ B_j\|_{(k)} \colon Y \in \mathfrak{B}_{c,j} \right\} = \max \left\{ \|X_n^*Y \circ B_j\|_{(k)} \colon Y \in C_1 \cup C_n/j \right\}   = 1.$$
        Again, if $Y \in C_1$ we can simply apply Lemma 3.1. If $Y = Y_n/j,$ where $Y_n \in C_n,$ note that $B_j/j$
        has trace class norm $1.$ Hence it may be assumed that $B_j/j$ is a rank-one partial isometry and Lemma 3.2 completes the proof.
         
         {\noindent \it Case 2.} $B = (\sum_{i=1}^k c_i(X)c_i(Y))^{-1} U,$ where $U$ is unitary.
         We claim that 
          $$\max \left\{ \|X^*Y \circ U\|_{(k)} \colon X \in \mathfrak{B}_{c,k}^{c(Y)} \right\} = 1.$$
          If $ X = (\sum_{i=1}^l c_i(Y))^{-1}  X_l,$ where $X_l \in C_l$ and $1 \leq l \leq k-1,$ then
          the claim exactly follows from the argument used in Case $1.$ Otherwise, from (2.4) we may assume that $ X \in (\sum_{i=1}^k c_i(Y))^{-1} X_n$ and $X_n \in C_n.$ 
         Now by 2.5 $$\displaystyle \max \left\{ \|X_n^*Y \circ B_j\|_{(k)} \colon Y \in \mathfrak{B}_{c,k} \right\} = \max \left\{ \|X_n^*Y \circ B_j\|_{(k)} \colon Y \in C_1 \cup C_n/k \right\} =  1.$$ 
         Indeed, if $Y \in C_1,$ we can apply Lemma 3.1. Lastly, if $Y = Y_n/k,$ where $Y_n \in C_n,$ we have
        \begin{align*}
            {1 \over k} \sum_{i=1}^k \sigma_i(X_n^* Y_n \circ U) &\leq \sigma_1(X_n^* Y_n \circ U) \leq 1 
        \end{align*}
        by Lemma 3.2.
        
        The proof is complete now.
    \end{proof}
    
    \section{Remarks}
    
   \subsection{von Neumann's trace inequality} We note that an analogue reasoning gives that the only extremal case in the von Neumann inequality $(1.1)$ is when 
   $A = xy^*$ is a rank-one partial isometry and $B$ is unitary. Indeed, we need to maximize
   the convex function $A \mapsto |\mbox{Tr } AB|$ over the unit ball $\mathfrak{B}_{(n)}^{\sigma(B)}$ and consider 
   (2.2) and the equality (2.3). Then we readily get $|{\rm Tr} AB| = |\langle y,B x \rangle| \leq \|y\|_2 \|Bx\|_2 = \sigma_1(B),$ and the proof is complete. 
   Certain convexity arguments were also used in \cite{G} to obtain von Neumann's fundamental result. 
   
   \subsection{Unitarily invariant norms of bilinear forms} We also remark that using the previous technique it is straightforward to get the inequality (1.3) as well.
   However, we leave the proof to the interested reader and now turn to a result of Horn, Mathias and Nakamura \cite{HM}.
   
   For simplicity, let \textbullet \: denote a symmetric bilinear form on $M_n(\mathbb{C}) \times M_n(\mathbb{C}).$ Every
   \textbullet \: defines a bilinear form \textbullet$_R$ by 
    $$ \mbox{Tr} (A \mbox{ \textbullet } B) C = \mbox{Tr} (B \mbox{ \textbullet}_R  C) A. $$
   Then one has the following characterization \cite[Theorem 1]{HM} of singular value inequalities related to \textbullet.
   
   \begin{thm}
     Let $A$ and $B$ $n \times n$ matrices. Then 
     $$ \sum_{i=1}^k \sigma_i(A \mbox{ \textbullet } B) \leq  \sum_{i=1}^k \sigma_i(A) \sigma_i(B)  \quad \mbox{ for all } \quad  1 \leq k \leq n$$
     if and only if 
     $$ \sigma_1(A \mbox{ \textbullet } B) \leq \sigma_1(A)\sigma_1(B) \mbox{ and }   \sigma_1(A \mbox{ \textbullet}_R B) \leq \sigma_1(A)\sigma_1(B). $$
   \end{thm}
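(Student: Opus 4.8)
The plan is to prove the two implications separately, disposing of the easier necessity direction by hand and then running the nested extreme-point scheme of Section~3 for the harder sufficiency direction.

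For the forward implication, taking $k=1$ in the Ky Fan inequalities gives $\sigma_1(A \mbox{ \textbullet } B) \le \sigma_1(A)\sigma_1(B)$ at once, so the work is to derive the bound for \textbullet$_R$. I would use the duality $\sigma_1(M) = \max\{|\mathrm{Tr}(MC)| : C \in R_1\}$ between the operator and trace norms. Feeding this into the identity defining \textbullet$_R$ and invoking the symmetry of \textbullet, I would first record
$$\mathrm{Tr}((A \mbox{ \textbullet}_R B) C) = \mathrm{Tr}((A \mbox{ \textbullet } C) B).$$
Von Neumann's inequality (1.1) then yields $|\mathrm{Tr}((A \mbox{ \textbullet } C)B)| \le \sum_i \sigma_i(A \mbox{ \textbullet } C)\sigma_i(B)$, and for $C \in R_1$ the hypothesis forces $\sum_{i=1}^m \sigma_i(A \mbox{ \textbullet } C) \le \sigma_1(A)$ for every $m$, i.e. $(\sigma_i(A \mbox{ \textbullet } C))_i$ is weakly majorized by $(\sigma_1(A),0,\dots,0)$. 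A routine Abel summation against the decreasing nonnegative sequence $(\sigma_i(B))$ then collapses the right-hand side to $\sigma_1(A)\sigma_1(B)$, which after maximizing over $C \in R_1$ gives $\sigma_1(A \mbox{ \textbullet}_R B) \le \sigma_1(A)\sigma_1(B)$.

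For the sufficiency direction I would imitate the proof of Theorem 3.3 verbatim in structure. Fixing $A$ and $k$ and writing $\sigma(A) = (\sigma_1(A),\dots,\sigma_n(A))$, the desired inequality is equivalent to $\max\{\|A \mbox{ \textbullet } B\|_{(k)} : B \in \mathfrak{B}_{(k)}^{\sigma(A)}\} \le 1$. Since the objective is convex in $B$, by (2.2) it suffices to verify this at the two families of extreme points: $B$ a scaled rank-$j$ partial isometry ($1 \le j \le k-1$) and $B$ a scaled unitary. Each resulting inequality I would in turn recast as a maximum of $\|A \mbox{ \textbullet } B\|_{(k)}$ over $A$ in the unweighted Ky Fan ball $\mathfrak{B}_{(j)}$ (resp. $\mathfrak{B}_{(k)}$) and reduce, via (2.3), to the extreme points $A \in R_1$ or $A \in R_n/j$ (resp. $R_n/k$).

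This leaves a handful of leaf configurations, and the crux is to clear each using only the two $\sigma_1$-hypotheses. When both factors are scaled unitaries I would estimate $\sum_{i=1}^k \sigma_i(\cdot) \le k\,\sigma_1(\cdot)$ and apply $\sigma_1(A \mbox{ \textbullet } B) \le \sigma_1(A)\sigma_1(B)$ directly. In every case where one factor is a rank-one partial isometry $uv^*$, I would instead pass to the trace norm, $\sum_{i=1}^k \sigma_i(\cdot) \le \sum_{i=1}^n \sigma_i(\cdot) = \max_U |\mathrm{Tr}(\cdot\, U)|$, and use the defining identity to rewrite $\mathrm{Tr}((A \mbox{ \textbullet } B)U)$ as $v^*(\,\cdot\, \mbox{ \textbullet}_R U)u$, whereupon the \textbullet$_R$-hypothesis bounds it by $\sigma_1(\cdot \mbox{ \textbullet}_R U) \le 1$; the rank-$j$ subcase is first reduced to rank-one by the same convexity observation used in Theorem 3.3, namely that $B_j/j$ lies in the convex hull of $R_1$. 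The main obstacle is bookkeeping rather than any single hard estimate: one must carry the normalizing weights $(\sum_{i=1}^j \sigma_i(A))^{-1}$ consistently through the double reduction and, in each leaf, choose correctly between \textbullet\ and \textbullet$_R$ and track which slot the rank-one factor occupies so that the trace identity delivers a genuine inner-product bound.
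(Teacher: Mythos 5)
Your proposal is correct and follows essentially the same route as the paper's proof: the same two-level extreme-point reduction via (2.2) and (2.3), the same trace-norm-ball observation reducing the rank-$j$ partial isometry to a rank-one one, trace duality combined with the \textbullet$_R$ hypothesis at the rank-one leaves, the estimate $\sum_{i=1}^k \sigma_i \leq k\sigma_1$ combined with the \textbullet\ hypothesis at the unitary--unitary leaf, and von Neumann's inequality plus the $k=n$ case of the hypothesis for necessity. The only cosmetic differences are that you maximize over $B$ for fixed $A$ while the paper fixes $B$ and varies $A$ (immaterial since \textbullet\ is symmetric), and that at the leaves you pair against unitaries via trace-norm duality where the paper pairs against the dual Ky Fan ball $\mathfrak{B}_{(k)^*}$.
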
 

   \begin{proof} '$\Longleftarrow$':
      First, let $\displaystyle A = \left(\sum_{i=1}^j \sigma_i(B)\right)^{-1} A_j,$ where $A_j$ is a rank-$j$
      isometry and $1 \leq j \leq k-1.$ Notice that 
        \begin{align*}
             \max \{ \|A_j  \mbox{ \textbullet } B\|_{(k)} \colon B \in \mathfrak{B}_{(j)}\} 
                 &= \max \{ \|A_j  \mbox{ \textbullet } B\|_{(k)} \colon B = U/j \mbox{ for some unitary } U  \\ 
                 & \hspace{3cm} \mbox{ or } B \mbox{ rank-1 partial isometry}  \} \\
                &= \max \{ \|A_j  \mbox{ \textbullet } B\|_{(k)} \colon A_j \mbox{ or }  B \mbox{ is rank-1 partial isometry}\}. 
        \end{align*}
       But, for any rank-$1$ partial isometry $X$ and contraction $Z,$  
        $$\|X \mbox{ \textbullet } Z\|_{(k)} = \mbox{Tr } X (Z \mbox{ \textbullet }_R Y) \leq \sigma_1(Z \mbox{ \textbullet }_R Y)
         \leq \sigma_1(Z)\sigma_1(Y) \leq 1,$$
       where $Y \in \mathfrak{B}_{(k)^*}.$ 
        
        Lastly, if  $\displaystyle A = \left(\sum_{i=1}^k \sigma_i(B)\right)^{-1} U,$ where $U$ is a unitary, we get
         \begin{align*}
             \max \{ \|U  \mbox{ \textbullet } B\|_{(k)} \colon B \in \mathfrak{B}_{(k)}\} &= \max \{ \|U  \mbox{ \textbullet } B\|_{(k)} \colon B \in R_1 \cup R_n/k \}  \\
                 &=  \max \{ \|U  \mbox{ \textbullet } B\|_{(k)} \colon B \in R_1  \} \\
                     & \qquad \vee  \max \{ \sigma_1(U  \mbox{ \textbullet } B) \colon B \in R_n \}  \\
                 &\leq \max \{ \sigma_1(B)\sigma_1(U  \mbox{ \textbullet }_R X) \colon B \in R_1, X \in \mathfrak{B}_{(k)^*}  \} \\
                     & \qquad \vee  \max \{ \sigma_1(U  \mbox{ \textbullet } B) \colon B \in R_n \}  \\
                 &\leq 1.    
        \end{align*}
       
        The function  $A \mapsto \|A \mbox{ \textbullet } B\|_{(k)}$ is convex on the unit ball $\mathfrak{B}_{(k)}^{\sigma(B)},$ hence with (2.2)
        at hand the proof is complete.
          
         '$\Longrightarrow$': We need to check that $\sigma_1(A \mbox{ \textbullet}_R B) \leq \sigma_1(A)\sigma_1(B)$ follows. 
         In fact, 
          \begin{align*}
             \sigma_1(A \mbox{ \textbullet}_R B)  &= \max \{ \mbox{Tr } (A \mbox{ \textbullet}_R B)X \colon X \in R_1 \} \\
                                                                             &= \max \{ \mbox{Tr } A (B \mbox{ \textbullet} X) \colon X \in R_1 \} \\ 
                                                                             &\leq \sigma_1(A) \sum_{k=1}^n \sigma_k(X \mbox{ \textbullet} B) \\
                                                                             &\leq \sigma_1(A)\sigma_1(B),
          \end{align*}
          where the first inequality comes from von Neumann's inequality. 
         \end{proof}
         
         \subsection{Fan products} Given $A$ and $B$ $n \times n$ matrices, the Fan product is defined by
          $$ (A \star B)_{ij} = \begin{cases}
                                   -a_{ii}b_{ii} &\mbox{if } i=j, \\
                                    a_{ij}b_{ij} &\mbox{otherwise},
                                \end{cases}
                               $$
         see \cite{RH}. Then it is simple to check that Tr $(A \star B)C = \mbox{ Tr } A( B^T \star C).$ Moreover,
         one has the inequalities
          $$ \sigma_1(A \star B) \leq \sigma_1(A) \sigma_1(B) \quad \mbox{ and } \quad  \sigma_1(A^T \star B) \leq \sigma_1(A) \sigma_1(B). $$
          This can be directly proved by the method of \cite{JN}. In fact, for any $x \in \mathbb{R}^n,$ define the map
           $$ \Theta(x) = \mbox{diag} (-x_1, x_2, \hdots, x_n) \in M_n(\mathbb{C}).$$
          If $A = (\underline{a}_1, \hdots, \underline{a}_n)$ and $B = (\underline{b}_1, \hdots, \underline{b}_n),$ let 
            $$ \Phi(A) = (\Theta(\underline{a}_1) | \hdots | \Theta(\underline{a}_n)) $$ and 
            $$ \Psi(B) = \underline{b}_1 \oplus \hdots \oplus \underline{b}_n.$$
           Then $$ A \star B = \Phi(A)\Psi(B).$$
           Additionally, $\Phi$ and $\Psi$ are contractive, hence $\sigma_1(\Phi(A)\Psi(B)) \leq \sigma_1(\Phi(A))\sigma_1(\Psi(B)) \leq \sigma_1(A)\sigma_1(B).$
           Now Theorem 4.1 implies that we have a family of inequalities (1.4) related to the Fan product. Simple counterexamples show
           that Lemma 3.1 does not hold with $\star$, hence (1.5) either: for instance, $X=Y= 1/\sqrt{3} ({\bf 1} \otimes {\bf 1}),$
           where ${\bf 1} = (1,1,1)^T \in \mathbb{R}^3,$ and $U$ is the unitary
                 $$ U = {1 \over 3}\begin{bmatrix}
                                 2 & 1  & 2 \\ 
                                -2 &  2 & 1  \\
                                 1  & 2 & -2 \\
                        \end{bmatrix}.
$$
        
         \subsection{Some open problems} The above examples show that one can deduce singular value inequalities if
         an adequate description of extreme points is at hand. In general, however, such a catalog is not available, and for instance, 
         we cannot apply our technique to prove (or disprove) the following problems concerning partial traces (denoted by Tr$_1$).
         
         \bigskip
         
         \noindent {\bf Question 1.} Let $A, B$ are $n \times n$ Hermitian matrices. Does it follow that 
            $$ \left\|\mbox{Tr}_1 \: [(A \otimes I + I \otimes A)(B \otimes I - I \otimes B)] \right\|_{(k)} \leq 2  \sigma_1(A) \left\|\mbox{Tr}_1 \: (B \otimes I - I \otimes B) \right\|_{(k)}$$
          for all $1 \leq k \leq n?$  
            
        \bigskip
         
         \noindent {\bf Question 2.} Let $A, B$ are $n \times n$ Hermitian matrices. Does it follow that 
            $$ \left\|\mbox{Tr}_1 \: [(A \otimes I + I \otimes A)(B \otimes I - I \otimes B)] \right\|_{(k)} \leq 2 \sum_{i=1}^k \sigma_i(A) \sigma_i(\mbox{Tr}_1 \: (B \otimes I - I \otimes B))$$
          for all $1 \leq k \leq n?$  
          
          \bigskip
                
          It is known that if the Hermitians $A$ and $B$ are commuting then the previous inequalities hold as it can be seen in \cite{L}, \cite{Lr}.

\end{document}